\numberwithin{equation}{section}
\newcommand{\N}{\mathbb{N}}
\newcommand{\CC}{\mathbb {C}}
\newcommand{\PP}{\mathcal{P}}
\newcommand{\ZL}{\mathcal{Z}}
\newcommand{\RR}{\mathbb{R}}
\newcommand{\Z}{\mathbb{Z}}
 \DeclareMathOperator{\dist}{dist}
\DeclareMathOperator{\supp}{supp}
\DeclareMathOperator{\conv}{conv}
\DeclareMathOperator{\card}{card}
\renewcommand{\phi}{\varphi}
\newcommand{\pw}{\mathcal{P}W_\pi}
\newcommand{\vep}{\varepsilon}
\newtheorem{Thm}{Theorem}[section]
\newtheorem{theorem}[Thm]{Theorem}
\newtheorem{lemma}[Thm]{Lemma}
\newtheorem{proposition}[Thm]{Proposition}
\newtheorem{corollary}[Thm]{Corollary}
\newtheorem{remark}[Thm]{Remark}
\renewcommand{\qedsymbol}{$\square$}
\begin{document}
\sloppy
\title[ Subspaces of $C^\infty$ invariant under the differentiation]
{Subspaces of $C^\infty$ invariant under the differentiation}
\author{Alexandru Aleman, Anton Baranov, Yurii Belov}

\address{
Alexandru Aleman,
\newline
Centre for Mathematical Sciences, Lund University,
\newline
P.O. Box 118, SE-22100
Lund, Sweden
\newline{\tt aleman@maths.lth.se}
\newline\newline \phantom{x}\,\,
 Anton Baranov,
\newline
Department of Mathematics and Mechanics,
St. Petersburg State University,
\newline
St. Petersburg, Russia,
\newline
\phantom{x}\,\, and
\newline
National Research University  Higher School of Economics,
\newline
St. Petersburg, Russia,
\newline {\tt anton.d.baranov@gmail.com}
\newline\newline \phantom{x}\,\, Yurii Belov,
\newline
Chebyshev Laboratory,
St. Petersburg State University,
\newline
St. Petersburg, Russia,
\newline {\tt j\_b\_juri\_belov@mail.ru}
\newline\newline \phantom{x}
}
\thanks{The second and the third author were supported by the Chebyshev Laboratory
(St.Petersburg State University) under RF Government grant 11.G34.31.0026 and by JSC "Gazprom Neft". The third author was partially supported by RFBR grant 12-01-31492 and Dmitriy Zimin fund Dynasty}

\begin{abstract}

Let $L$ be a proper differentiation invariant subspace of
$C^\infty(a,b)$ such that the restriction operator
$\frac{d}{dx}\bigl{|}_L$ has a discrete spectrum $\Lambda$
(counting with multiplicities). We prove that $L$ is spanned by
functions vanishing outside some closed interval $I\subset(a,b)$
and monomial exponentials $x^ke^{\lambda x}$ corresponding to
$\Lambda$ if its density does not exceed  the critical value
$\frac{|I|}{2\pi}$, and moreover, we show that the result is not
necessarily true when  the density of $\Lambda$ equals the
critical value. This answers a question posed by the first author
and B. Korenblum. Finally, if the residual part of $L$ is trivial, then $L$
is spanned by the monomial exponentials it contains.

\end{abstract}

\maketitle
\section{Introduction}  Consider the  space  $C^\infty(a,b)$
equipped with the usual topology of uniform convergence on compacta of
each derivative $f^{(k)}, k=0,1,\ldots$; more specifically, the
topology  given by any of the translation invariant metrics given
below. Consider a sequence $(I_j)$ of compact intervals with $\cup_j
I_j=(a,b)$, denote by $\|\cdot\|_j$ the sup-norm over $I_j$ and set
$$d(f,g)=\sum_{j,k=0}^\infty 2^{-j-k}
\frac{\|f^{(k)}-g^{(k)}\|_j}{1+\|f^{(k)}-g^{(k)}\|_j}.
$$

The present paper concerns the structure of closed subspaces of $C^\infty(a,b)$ which are invariant for the differentiation operator $D=\frac{d}{dx}$. Our  investigation follows the classical line, namely we are going to consider an appropriate version of  {\it  spectral synthesis} for these subspace, which we now explain.

Strictly speaking, a continuous operator has the property of spectral syntesis if any non-trivial invariant subspace is generated by the   root-vectors contained in it. The definition extends in an obvious way to families of commuting operators. The parade examples are the translation invariant subspaces of the locally
convex space of continuous functions on  the real line. These  are  now well
understood due to the work of J. Delsarte \cite{Del}, J.-P. Kahane
\cite{Kahane} and L. Schwartz \cite{Schw}. In  the setting of entire
functions  translation-invariance is equivalent to complex differentiation invariance and the spectral synthesis property has been proved  by  L. Schwartz \cite{Schw1}.

The structure of differentiation-invariant subspaces of  $C^\infty(a,b)$  is more complicated and was only investigated recently in \cite{alkor}. The reason for the additional complication is the presence of  the following subspaces: Given a closed set $S\subset (a,b)$ let
\begin{equation}\label{complication}L_S=\{f\in  C^\infty(a,b):~f^{(k)}(S)=\{0\},\, k\ge 0\}\,.\end{equation}
In many cases these subspaces are nontrivial,  and obviously, they contain no root-vector of $D$, since these functions  are monomial exponentials, i.e. they have the form $x\to x^ne^{\lambda x}\,,n\in \mathbb{N}\,, \lambda \in \mathbb{C}\,.$ According to \cite{alkor} the $D$-invariant subspaces of  $C^\infty(a,b)$ can be classified in terms of  the spectrum of the restriction of this operator. More precisely, given such a closed subspace $L$ of   $C^\infty(a,b)$  we have the following three alternatives:

(i) $\sigma(D|_L)=\mathbb{C}$,

(ii)  $\sigma(D|_L)=\emptyset$,

(iii)  $\sigma(D|_L)$ is a nonvoid discrete subset of $\mathbb{C}$ consisting  of eigenvalues of $D$.

Very little is known about the structure of subspaces of the form
(i). A concrete example is obtained by choosing the set $S$ in
\eqref{complication} to consist of finitely many points, or
disjoint intervals. The subspaces of type (ii) are called residual
and  are completely characterized in  \cite{alkor}. The main
result of that paper asserts that such a subspace has the form
$$L_I=\{f\in C^\infty(a,b): f^{(k)}(I)=0, k\geq0\}$$ for some
interval $I\subset(a,b)$ which is { \it relatively closed in $(a,b)$.} The interval $I$ may reduce to one point. We will call $I$ {\it the
residual interval}.

The subspaces of type (iii) are the main concern of this paper. Such a subspace $L$ might have a nontrivial residual part (see again  \cite{alkor} for the details) given by
$$L_{res}=\bigcap\{p(D)L:~p-\text{polynomial}\,\}.$$
The natural question which arises and has been formulated in  \cite{alkor} is:

{\it Is every  $D$-invariant subspace of type {\rm (iii)}
generated by its residual part and the monomial exponential it contains?}

In the case when  the spectrum of the restriction of $D$ is a
finite set an affirmative answer has been given in
\cite[Proposition 6.1]{alkor}. The general case
is quite subtle and, surprisingly enough, our results reveal an interplay between the length of the residual interval  and the uniform upper density of the set  $\sigma(D|_L)$. This is defined as usual by
$$
\mathcal{D}_+(\sigma(D|_L)):=\lim_{r\rightarrow\infty}\sup_{x\in\mathbb{R}}\frac{\card\{\lambda\in \sigma(D|_L): \Re\lambda\in[x,x+r]\}}{r}\,,
$$
where multiplicities are counted.

Our main results are given below. Throughout in the statements  $L$ is a $D$-invariant subspace of $C^\infty(a,b)$ satisfying alternative (iii) from above and $\mathcal{E}(L)$ denotes the set of monomial exponential functions $x\to x^ke^{\lambda x}$ contained in $L$.

\begin{theorem}
\label{main1}
 Assume that the $D$-invariant subspace $L$ has a compact  residual interval $I$. If
$$
2\pi \mathcal{D}_+(\sigma(D|_L))<|I|\,,
$$
then
$$
L= \overline{L_{res}+\mathcal{E}(L)}.
$$
\end{theorem}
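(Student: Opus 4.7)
The proof is by duality. Since the inclusion $\overline{L_{res}+\mathcal{E}(L)} \subseteq L$ is automatic, by Hahn--Banach in the Fr\'echet space $C^\infty(a,b)$ it suffices to show that every distribution $\mu \in \mathcal{E}'(a,b)$ annihilating both $L_{res}$ and $\mathcal{E}(L)$ lies in $L^\perp$.

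The first step exploits the two annihilation hypotheses separately. Since $L_{res} = L_I$, the condition $\mu \perp L_{res}$ forces $\supp(\mu) \subset I$ (standard bump function argument). The condition $\mu \perp \mathcal{E}(L)$ translates, via the entire Fourier--Laplace transform $F(z):=\mu(e^{zx})$, into the vanishing of $F$ on $\Lambda$ with its prescribed multiplicities. The Paley--Wiener--Schwartz theorem then gives the growth bound
\[
|F(z)| \leq C(1+|z|)^{N} \, e^{H_I(z)}, \qquad z\in\mathbb{C},
\]
where $H_I$ is the support function of $I$; in particular the indicator diagram of $F$ is contained in $I$ and $F$ has only polynomial growth along the imaginary axis.

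The heart of the argument is then to conclude that such a $\mu$ annihilates every $f \in L$. A natural route is to pass to the quotient $X := L/L_{res}$: the operator $D$ descends to $X$ with the same spectrum $\Lambda$, and since $L_{res} = \bigcap_p p(D)L$ one has $\bigcap_p p(D)X = \{0\}$. In this quotient the images of the monomial exponentials in $\mathcal{E}(L)$ are precisely the generalized eigenvectors of $D$, and the goal becomes showing that they span a dense subspace of $X$ for the quotient topology --- equivalently, dualising back, that any $\mu$ in $L_{res}^\perp \cap \mathcal{E}(L)^\perp$ already lies in $L^\perp$. The density hypothesis $2\pi\mathcal{D}_+(\Lambda) < |I|$ is the quantitative input here: a functional obstructing density would produce, via the Paley--Wiener picture above, an entire function of exponential type with indicator inside $I$ vanishing on $\Lambda$, and the gap between $\mathcal{D}_+(\Lambda)$ and the critical value $|I|/(2\pi)$ is exactly the regime in which classical completeness/excess results for exponential systems on intervals of length $|I|$ (in the spirit of P\'olya, Levinson, and the theory of de Branges spaces) can be invoked to rule such an obstruction out.

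The main obstacle I foresee is this final step: converting the abstract hypothesis that $D|_L$ has discrete spectrum $\Lambda$ together with the triviality $\bigcap_p p(D)X = \{0\}$ into a concrete Paley--Wiener statement amenable to sharp completeness machinery. The subtlety is that $L$ itself is not given explicitly --- only the residual interval and the eigenvalue data are --- so one must reconstruct enough of the fine spectral structure of $D|_X$ to run a completeness argument without losing sharpness of the bound $2\pi\mathcal{D}_+(\Lambda) < |I|$. Consistent with this, the statement is genuinely sharp: at the critical value the conclusion can fail, as announced in the abstract, confirming that the strict inequality must be used essentially and not as slack.
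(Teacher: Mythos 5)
Your setup is right: dualize, observe that $\mu\perp L_{res}$ forces $\supp\mu\subset I$ and $\mu\perp\mathcal E(L)$ forces $\hat\mu$ to vanish on $\Lambda$, and then try to show $\mu\in L^\perp$. But the step you flag as ``the main obstacle'' is indeed where the argument lives, and the route you sketch for it would not work, for a reason worth spelling out.

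You suggest that a functional obstructing density would produce an entire function of exponential type with indicator in $I$ vanishing on $\Lambda$, and that the hypothesis $2\pi\mathcal D_+(\Lambda)<|I|$ is the regime where ``classical completeness/excess results for exponential systems'' rule such a function out. This has the logic backwards: under that density hypothesis such a function certainly \emph{does} exist (the exponentials $e^{i\lambda t}$, $\lambda\in\Lambda$, are generally \emph{incomplete} in $L^2(I)$ precisely because the density is subcritical, and indeed $L_{res}\neq\{0\}$). The paper even constructs one explicitly: pick $\phi\in L^\perp$ with $\conv\supp\phi=I$ (such $\phi$ exists because $L_{res}=L_I$) and write $\hat\phi=G_\Lambda E$ with $G_\Lambda$ a canonical product over $\Lambda$. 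The actual dual statement that must be proved is a \emph{mixed completeness} result: using \cite[Prop.~3.1]{alkor} one gets $\frac{G_\Lambda E}{z-w}\in\widehat{L^\perp}$ for every zero $w$ of $E$, so the task becomes to show that the system $\{k_\lambda\}_{\lambda\in\Lambda}\cup\{G(z)/(z-w)\}_{w\in\mathcal Z_E}$ (with $G=G_\Lambda E$) is complete in $\mathcal{PW}_I$. This is Proposition~\ref{mainprop}, and it is not a consequence of classical P\'olya--Levinson excess theory --- as the introduction notes, the exponential part of the system is not even complete by itself. Its proof requires the Mitkovski--Poltoratski solution of the P\'olya problem and the Makarov--Poltoratski form of the second Beurling--Malliavin theorem, fed with delicate argument--function estimates. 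Your quotient $X=L/L_{res}$ suggestion does not supply the needed concrete function $G_\Lambda E$ either; the maximality argument for $\phi$ (together with the reduction from polynomial growth to $\mathcal{PW}_I$) is what makes the Hilbert--space reformulation possible. So while the duality framing is correct, the essential mechanism --- identifying the mixed system and proving its completeness --- is missing, and the classical tools you invoke are pointed at the wrong statement.
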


The restriction on the density of $\sigma(D|_L)$ turns out to be essential.
If $2\pi \mathcal{D}_+(\sigma(D|_L)) = |I|$, then the spectral synthesis may fail as the next theorem shows.

\begin{theorem}
\label{main2}
There exists a $D$-invariant subspace $L$ as above
such that
$$
L_{res} = \{f\in C^\infty(-2\pi,2\pi):
f|_{[-\pi, \pi]} \equiv 0\}\,,$$ $$\mathcal{D}_+(\sigma(D|_L)) = 1\,,
$$
but
$$
L \ne \overline{L_{res}+\mathcal{E}(L)}.
$$
\end{theorem}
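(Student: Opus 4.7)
The plan is to construct $L$ via duality together with the theory of entire functions of exponential type at the critical density.

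A closed $D$-invariant subspace $L \supset L_{res}$ of $C^\infty(-2\pi, 2\pi)$ corresponds bijectively to a closed $D$-invariant annihilator $L^\perp$ in the distribution dual $\mathcal{E}'(-2\pi, 2\pi)$; the inclusion $L \supset L_{res}$ is equivalent to $\supp \mu \subset [-\pi, \pi]$ for every $\mu \in L^\perp$. Under the Laplace transform $\mu \mapsto \tilde\mu(\lambda) := \langle \mu, e^{\lambda x}\rangle$, the $D$-action becomes multiplication by $\lambda$ and Paley--Wiener identifies $\widetilde{L^\perp}$ with a closed $\mathbb{C}[\lambda]$-module of entire functions of exponential type $\leq \pi$ in the real direction with polynomial growth in the imaginary direction. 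The spectrum $\sigma(D|_L) = \Lambda$ is the common zero set of $\widetilde{L^\perp}$, and spectral synthesis $L = \overline{L_{res}+\mathcal{E}(L)}$ is equivalent to the maximality $L^\perp = \mathcal{M}$, where $\mathcal{M}$ denotes the space of all distributions supported in $[-\pi, \pi]$ whose transforms vanish on $\Lambda$.

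The construction I would carry out is: exhibit a set $\Lambda \subset \mathbb{C}$ with $\mathcal{D}_+(\Lambda) = 1$ and two non-proportional entire functions $F_0, F_1$ in the Paley--Wiener class above, both vanishing on $\Lambda$, such that $F_1$ does not lie in the closed $\mathbb{C}[\lambda]$-module generated by $F_0$. Writing $\mu_0, \mu_1$ for the corresponding distributions, set $L^\perp := \overline{\mathbb{C}[D]\mu_0}$ and $L := (L^\perp)^\perp$. Three verifications then yield the theorem: (i) $\sigma(D|_L) = \Lambda$ because the common zero set of $\{\lambda^k F_0\}_{k \geq 0}$ equals the zero set of $F_0$; (ii) $L_{res} = \{f \in C^\infty(-2\pi, 2\pi) : f|_{[-\pi,\pi]} \equiv 0\}$ provided $\supp \mu_0 = [-\pi, \pi]$, which is ensured by prescribing the support indicator of $F_0$ to satisfy $h_{F_0}(0) = h_{F_0}(\pi) = \pi$; and (iii) synthesis fails since $\mu_1 \in \mathcal{M} \setminus L^\perp$ gives $L \supsetneq \mathcal{M}^\perp = \overline{L_{res}+\mathcal{E}(L)}$.

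The principal obstacle is the explicit construction of the pair $(F_0, F_1)$ at critical density. At $\mathcal{D}_+(\Lambda) = 1$ one is precisely at the borderline where Paley--Wiener / Cartwright-class uniqueness becomes delicate: below the critical value (the setting of Theorem \ref{main1}) the analogous class of entire functions vanishing on $\Lambda$ is forced to be cyclic as a $\mathbb{C}[\lambda]$-module, forcing $L^\perp = \mathcal{M}$ and hence synthesis. At the critical density, however, the geometry of $\Lambda$ can be arranged --- for instance, by perturbing a real arithmetic progression of density $1$ with carefully chosen imaginary shifts --- so that $\mathcal{M}$ admits two essentially different Hadamard canonical products $F_0, F_1$ remaining in the polynomially-bounded class. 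Producing such a $\Lambda$ together with the required support indicator for $F_0$ is the crux of the argument; once the pair is in hand, items (i)--(iii) follow from standard indicator-function calculus and Paley--Wiener computations.
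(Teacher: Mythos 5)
Your duality framework is correct and mirrors the paper's reduction: spectral synthesis fails iff the weak-star closed $D$-module generated by $\mu_0$ is strictly smaller than the full annihilator $\mathcal{M}$ of $L_{res}+\mathcal{E}(L)$, and your verifications (i)--(ii) are sound once a suitable $\mu_0$ is in hand. But the two items you defer as ``the crux'' and ``standard Paley--Wiener computations'' are the entire content of the theorem, and your outline of them would not go through as stated. The construction you suggest---perturbing a real arithmetic progression of density $1$ by imaginary shifts---is not what the paper does and would be very hard to control (such shifts tend to destroy membership in the polynomially-bounded Paley--Wiener class and make the support-indicator bookkeeping intractable). The paper instead keeps $\Lambda\subset\mathbb{Z}$ and deletes two sparse \emph{real} subsequences (the perfect squares and a lacunary sequence), so that $G_\Lambda=\sin\pi z/(U(z)V(z))$ with $U,V$ of zero exponential type; this specific choice is what makes the Shannon--Kotelnikov expansion $G_\Lambda S=\sum a_n k_n$ and the interpolation identity \eqref{nb1} explicitly computable, with coefficients $a_n=(-1)^nT(n)$ decaying faster than every power.

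More seriously, you never indicate how to verify that $F_1\in\mathcal{M}\setminus L^\perp$, which is the whole point. Producing some $F_1$ vanishing on $\Lambda$ is trivial; the difficulty is exhibiting a vector in the predual that separates $\mu_1$ from the weak-star closure of $\mathbb{C}[D]\mu_0$, i.e.\ an $h\in L$ with $\langle\mu_1,h\rangle\ne 0$. The paper's Lemma~\ref{debr} achieves this by a Hilbert-space self-duality trick: one builds $S$ so that $G_\Lambda S$ is $L^2$-orthogonal to $G_\Lambda\mathcal{P}$, lets $h_0\in L^2(-\pi,\pi)$ be the inverse transform, and observes that the \emph{same} $h_0$---which, crucially, lies in $C^\infty[-\pi,\pi]$ thanks to the superpolynomial decay of $a_n$ and hence extends to some $h\in C^\infty(-2\pi,2\pi)$---both belongs to $L=M^\perp$ (by the orthogonality) and satisfies $\phi(h)=\|h_0\|_{L^2}^2>0$. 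Neither this mechanism nor a replacement for it appears in your proposal, and without it your item~(iii) is an unverified assertion rather than a consequence of routine Paley--Wiener calculus. The regularity point ($h_0\in C^\infty$ up to the boundary so that the extension exists) is also a constraint on $F_1$ beyond mere membership in $\mathcal{M}$ that your outline does not account for.
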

{
Our methods pertain also to the case when the residual interval is not compact. Note that in this case we have  either  $I=(a,b)$, in which case the  residual subspace is trivial $L_{res}=\{0\}$, or there exists $c\in (a,b)$ such that $I=(a,c]$, or $I=[c,b)$.
 It turns out that in these cases spectral synthesis does always hold, as the following theorem shows.}
 \begin{theorem}
If the residual interval $I$ of the $D$-invariant subspace $L$ is non-compact, then
$$L= \overline{L_{res}+\mathcal{E}(L)}.$$
\label{main3}
\end{theorem}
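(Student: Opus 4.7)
The plan is to reduce Theorem \ref{main3} to Theorem \ref{main1} by enlarging $L$ to a $D$-invariant subspace with compact residual interval. By the reflection $x\mapsto -x$ it suffices to treat the case $I=[c,b)$; the case $I=(a,b)$ with $L_{res}=\{0\}$ is handled analogously by choosing an auxiliary compact subinterval $[c,b_1]\subset (a,b)$ and moving both endpoints inward.

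For $b_1\in (c,b)$, consider
\[
\tilde L := L + L_{[c,b_1]}, \qquad L_{[c,b_1]} := \{f\in C^\infty(a,b) : f^{(k)}(x)=0 \text{ for all } x\in [c,b_1],\ k\ge 0\}.
\]
The first step is to verify that $\tilde L$ is closed and $D$-invariant, that it is of type (iii) with $\tilde L_{res}=L_{[c,b_1]}$ (so its residual interval is the compact $[c,b_1]$), and that $\sigma(D|_{\tilde L})=\sigma(D|_L)=\Lambda$, so in particular $\mathcal E(\tilde L)=\mathcal E(L)$. Adding a residual (empty-spectrum) subspace changes neither the spectrum nor the monomial exponentials, while shrinking the residual interval from the non-compact $[c,b)$ to the compact $[c,b_1]$.

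The synthesis for $L$ then follows by duality. Let $\mu\in \mathcal E'(a,b)$ annihilate $L_{res}+\mathcal E(L)$; it suffices to show $\mu\perp L$. Since $L_{res}=L_{[c,b)}$ contains all smooth functions supported in $(a,c)$ extended by zero to $[c,b)$, the condition $\mu\perp L_{res}$ forces $\supp \mu\subset [c,b)$, and the compactness of $\supp\mu$ in $(a,b)$ then yields $\supp\mu\subset [c,b_0]$ for some $b_0<b$. Provided one can choose $b_1\in (b_0,b)$ with $b_1-c>2\pi\mathcal D_+(\Lambda)$ — automatic when $b=+\infty$, and possible for $b$ finite whenever $b-c>2\pi\mathcal D_+(\Lambda)$ — Theorem \ref{main1} applied to $\tilde L$ gives $\tilde L=\overline{L_{[c,b_1]}+\mathcal E(L)}$. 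The inclusion $\supp\mu\subset [c,b_1]$ automatically gives $\mu\perp L_{[c,b_1]}$, and combined with $\mu\perp \mathcal E(L)$ this yields $\mu\perp \tilde L\supset L$. Hahn--Banach then delivers $L=\overline{L_{res}+\mathcal E(L)}$.

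The main obstacle is the remaining case $b<\infty$ with $b-c\le 2\pi\mathcal D_+(\Lambda)$, in which no admissible $b_1$ exists in the above construction. Here a separate direct argument is needed: the entire function $\hat\mu(\lambda)=\mu(e^{\lambda x})$ has indicator diagram contained in the real segment $[c,b_0]$ of length strictly less than $b-c$, yet vanishes on $\Lambda$ with the prescribed multiplicities. A Krein/Beurling--Malliavin-type density uniqueness result, tailored to this class of entire functions arising as Fourier--Laplace transforms of compactly supported distributions, should force $\hat\mu\equiv 0$ and hence $\mu=0$; making this rigorous in the required generality is the technical heart of the proof.
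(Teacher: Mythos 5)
Your reduction strategy — enlarge $L$ to $\tilde L = L + L_{[c,b_1]}$ so as to compactify the residual interval and then invoke Theorem \ref{main1} — is genuinely different from the paper's route, and in the regime where it applies it is an appealing shortcut. However, there is a real gap, and it sits exactly where you flag it, in the case $b<\infty$ with $b-c\le 2\pi\mathcal D_+(\Lambda)$. Your proposed fix there does not work: you want a uniqueness theorem forcing $\hat\mu\equiv 0$, but that is the wrong target. The functional $\mu$ annihilating $L_{res}+\mathcal E(L)$ need not vanish; it only needs to lie in $L^\perp$, and in general $L^\perp$ is a large space. A Beurling--Malliavin completeness argument would give $\mu=0$ only when the radius of completeness $R(\Lambda)$ exceeds $|I(\mu)|$, which is precisely the situation where $L=C^\infty(a,b)$ and hence $L$ is not a proper subspace at all; in the interesting range $R(\Lambda)<|I|$ one cannot conclude $\mu=0$.

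The ingredient you are missing is the paper's Remark \ref{rem1}: the density hypothesis $2\pi\mathcal D_+(\Lambda)<|I|$ in Theorem \ref{main1}, Proposition \ref{mainprop} and Corollary \ref{approxres} can be weakened to $R(\Lambda)<|I|$, where $R(\Lambda)$ is the radius of completeness. Proving this requires going back into the proof of Proposition \ref{mainprop} and noticing that the density is used only to pass from \eqref{n15} to \eqref{n16}, and that $R(\Lambda)<|I|$ suffices for this step (via an auxiliary function $T$ with $G_2T\in\mathcal{PW}_{\pi-\varepsilon}$). This is precisely the tool that covers $2\pi\mathcal D_+(\Lambda)\ge |I| > R(\Lambda)$. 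With that in hand, the paper avoids the enlargement $\tilde L$ altogether: it uses Lemma \ref{lemma1} (together with the observation at the start of Section \ref{nores}) to produce annihilators $\varphi\in L^\perp$ with $|I(\varphi)|$ arbitrarily close to $|I|$, then applies the $R(\Lambda)$-version of Corollary \ref{approxres} to $I(\varphi)$, and lets $\varepsilon\to 0$. The degenerate alternative $R(\Lambda)\ge|I|$ forces $L=C^\infty(a,b)$, contradicting properness, so it does not arise.

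Two secondary points in your argument also need care even in the range where it applies. First, it is not automatic that $\tilde L=L+L_{[c,b_1]}$ is closed; you should work with its closure, which costs nothing but must be said. Second, the claims $\tilde L_{res}=L_{[c,b_1]}$ and $\sigma(D|_{\tilde L})=\sigma(D|_L)$ require proof: enlarging $L$ shrinks the annihilator, so a priori the set of common zeros of $\widehat{\tilde L^\perp}$ — hence $\sigma(D|_{\tilde L})$ — could strictly contain $\Lambda$, and the residual interval of $\tilde L$ could a priori be a proper subinterval of $[c,b_1]$. These are plausible but not free. If you repair the main gap by importing the radius-of-completeness refinement, the cleanest path is to drop the enlargement and follow the paper's direct dual argument, which sidesteps these verifications.
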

 
Our approach is a substantial improvement of the method used in \cite{alkor} and is inspired by the ideas in  \cite{BBB1}.
It is well known that the spectral synthesis problem
for linear operators in a Hilbert space
is closely related to the {\it hereditary completeness} property
for systems of vectors (see e.g. \cite{markus}). The complete and minimal system of
vectors $\{x_n\}_{n\in\mathbb{N}}$ with complete
biorthogonal $\{y_n\}_{n\in\mathbb{N}}$ is said to be {\it hereditarily complete}
if any mixed system $\{x_n\}_{n\in \mathbb{N}\setminus N_1}\cup\{y_n\}_{n\in N_1}$
is also complete.  This property for exponentials in the  space $L^2(I)$
was extensively studied in \cite{BBB2}.
As we will see later we need to prove completeness of {\it some}
mixed system (the whole system $\{x_n\}$ in our case is not even complete).
This was done in \cite{BBB2} using the results of \cite{BBB1} and sharp density result
of Beurling--Malliavin type. The Proposition \ref{mainprop} is an adapted
 version of the main result of \cite{BBB2}. So the proof of the Theorem \ref{main1}
consists of two steps: we reduce the problem to a Hilbert space problem (Section \ref{step1})
and solve the appropriate mixed completeness problem (Section \ref{step2}).

The idea of the construction of the counterexample in Theorem \ref{main2}
goes back to \cite[Theorem 1.3]{BBB1}.
\smallskip

\textbf{Organization of the paper.} The paper is organized as follows.
Theorem \ref{main1} is proved in Sections \ref{step1} and \ref{step2}.
Example of the absence of spectral synthesis is given in Section
\ref{examp}. In Section \ref{nores} we prove Theorem \ref{main3}.


\section{Proof of Theorem \ref{main1}
\label{step1}}

\subsection{Preliminaries}
The continuous linear functionals on $C^\infty(a,b)$  are compactly supported
distributions of finite order on the interval $(a, b)$ (that is, distributions of the form
$\phi=f^{(n)}$,  $n\in\mathbb{N}$, where $f\in L^1_{loc}(a,b)$ and differentiation
is understood in the sense of distributions). We denote this class by $S(a,b)$.
As usual we can define the
Fourier transform of $\varphi\in S(a,b)$ by the formula
$$
\hat{\varphi}(z)=\varphi(e^{iz}).
$$
Since $\phi$ has finite order, the function $\hat{\varphi}$ is an entire function
of finite exponential
type with at most polynomial growth on the real line. Put
$$
\mathcal{H}_I=\biggl{\{}f: f = \sum_{k=0}^n z^k(f_k(z)+c_k),
\quad f_k\in\mathcal{PW}_I, \ c_k \in \CC \biggr{\}},
$$
where $\mathcal{PW}_I$ is the Fourier image of the space $L^2(I)$.
For every functional $\varphi\in S(a,b)$ we have $\hat{\varphi}\in\mathcal{H}_I$,
where $I=I(\varphi)$ is a closed interval such that $\supp\phi\subset I$.

Under the  Fourier transform the duality  between $S(a,b)$ and $C^\infty(a,b)$ becomes
the duality between the space $\mathcal{H}=\cup_{I\subset(a,b)}\mathcal{H}_I$ and
the space of entire functions $\mathcal{U}$ with conjugate indicator diagrams
in $(a,b)$ and decreasing faster than any polynomial along the real line.
Let $F\in\cup_{I\subset(a,b)}\mathcal{H}_I$, $G\in\mathcal{U}$. If $F$ has
infinite number of zeros, then its bilinear
form $(F,G)_{\mathcal{H},\mathcal{U}}=(\check{F},\check{G})_{S(a,b), C^{\infty}(a,b)}$
can be viewed as usual inner product in $L^2(\mathbb{R})$ (or $\mathcal{PW}_I$)
of the functions $F(z)\slash P(z)$ and $G(z)\overline{P(\overline{z})}$,
where $P$ is a polynomial of a sufficiently large degree whose zero set is a subset of
zero set of $F$. This explains how spaces $\mathcal{PW}_I$ come into the picture.

For an entire function $F$ we will denote its zeros set as $\mathcal{Z}_F$.

\subsection{The associated  a Hilbert space problem}
Let us denote $\Lambda=\sigma(D|_L)$,
$$
L_0= \overline{L_{res}+\mathcal{E}(L)},
$$
and let $I$ be the {\it residual interval} given by
$$
L_{res}= L_I =\{f\in C^\infty(a,b): f^{(k)}(I)=0\,,k\ge 0\}.
$$
If $I$ reduces to one point, it is easy to see that
$L=C^\infty(a,b)$. Thus we shall assume throughout that
$I$ does not reduce to a point.

Let us consider the annihilators $L^\perp$ and $L_0^\perp$
of $L$ and $L_0$ respectively in the dual space $S(a,b)$.
It is easy to see that
$$
\widehat{L_0^{\perp}}=\{F: F\in \mathcal{H}_I, F\bigl{|}_\Lambda=0\}.
$$
It will be  sufficient  to prove that $L^{\perp}$ is dense in $L^\perp_0$,
in the weak star topology.

Let us consider the distribution $\phi\in L^\perp$ with
the maximal length of $\conv\supp\phi$. It is clear that $\hat{\phi}\in \mathcal{H}_I$
but $\hat{\phi}\notin \mathcal{H}_J$ for any subinterval $J\subset I$, $J\neq I$.  We can write
$$
\hat{\phi}(z)=G_\Lambda(z)E(z),
$$
where $G_\Lambda$ is some canonical product corresponding to the sequence
$\Lambda$) and $E$ is some entire function. In fact, we shall choose $G_\Lambda$ such that $G(z)/\overline{G(\bar{z})}$
is a quotient of two Blaschke products. Moreover, without loss of generality we can assume that $E$ has no multiple zeros and $E(\lambda)\neq0$ for $\lambda\in\Lambda$.

From \cite[Proposition 3.1]{alkor} we know that
$$
\frac{G_\Lambda(z)E(z)}{z-w}\in \widehat{L^\perp},\quad w\in\mathcal{Z}_E.
$$
Therefore, we can further assume that $G_\Lambda E\in\mathcal{PW}_I$, otherwise we  can start with the function
$\frac{E(z)}{(z-w_1)...(z-w_n)}$ in place of $E$.

We argue by contradiction. Suppose that  $L^{\perp}$ is not dense in $L_0^\perp$.
Then there exists an entire function $T$ such that
$G_\Lambda T\in \mathcal{H}_I$ and $G_\Lambda T\notin \widehat{L^\perp}$.
We fix such a function $T$ and number $N$ such that $G_\Lambda(x) T(x) =
O(1+|x|^N)$.
From the Hahn--Banach
Theorem we know that there exists a non-trivial function $f\in C^\infty(I)$ such that
$$
\biggl{(}\frac{G_\Lambda(z)E(z)}{z-w},\hat{f}\biggr{)}=0,
\quad w\in\mathcal{Z}_E \quad \text {and}\quad (G_\Lambda T, \hat{f})\neq 0.
$$

In order to arrive at a Hilbert space setting we assume first that  $G_\Lambda T\in \mathcal{PW}_I$.  In this case both equations may be understood as usual inner products in $L^2(\mathbb{R})$. The general case   ($G_\Lambda T$ grows polynomially)  will be reduced to
this special situation in the next subsection.
This leads to the following system of equations
\begin{equation}
\begin{cases}
\int_\mathbb{R}\frac{G_\Lambda(x)E(x)}{x-w}\overline{F(x)}dx=0,\quad w\in\mathcal{Z}_E,\\
\int_\mathbb{R}G_\Lambda(x)T(x)\overline{F(x)} dx \neq 0.
\label{inteq}
\end{cases}
\end{equation}

The contradiction we are seeking for is  given by the following proposition. The reproducing kernel at $\lambda$ in $\mathcal{PW}_I$ will be denoted by $k_\lambda$.
\begin{proposition}
\label{pw}
Let $G\in \mathcal{PW}_I$ be a function with simple zeros and such that $G\notin \mathcal{PW}_J$ for any proper subinterval $J$ of $I$. If we have a partition of its zero set $\mathcal{Z}_G=\Lambda_1\cup\Lambda_2$, $\Lambda_1\cap\Lambda_2=\emptyset$ such that $2\pi \mathcal{D}_+(\Lambda_2)<|I|$,
then the mixed system
\begin{equation}
\{k_\lambda\}_{\lambda\in\Lambda_2}\cup\biggl{\{}\frac{G(z)}
{z-\lambda}\biggr{\}}_{\lambda\in\Lambda_1}
\label{mixed}
\end{equation}
is complete in $\mathcal{PW}_I$.
\label{mainprop}
\end{proposition}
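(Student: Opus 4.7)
The plan is to argue by contradiction: assume $F\in\mathcal{PW}_I$ is orthogonal to every element of \eqref{mixed} and show $F\equiv 0$. Orthogonality to $\{k_\lambda\}_{\lambda\in\Lambda_2}$ gives $F(\Lambda_2)=0$ by the reproducing property, while orthogonality to $\{G/(z-\lambda)\}_{\lambda\in\Lambda_1}$, rewritten as an integral on $\mathbb{R}$, becomes
$$\int_{\mathbb{R}}\frac{G(x)\overline{F(x)}}{x-\bar\lambda}\,dx=0,\qquad \lambda\in\Lambda_1.$$

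Next I would introduce the reflected function $\tilde F(z):=\overline{F(\bar z)}$ and the auxiliary entire function $H(z):=G(z)\tilde F(z)$, which lies in $L^1(\mathbb{R})$ (by Cauchy--Schwarz) and has exponential type at most $|I|$. The constraints translate as follows: $H$ vanishes simply on $\Lambda_1$ (from $G$), vanishes at least doubly on $\Lambda_2$ (from both factors), and its Cauchy integral $\int_{\mathbb{R}}H(x)/(x-\mu)\,dx$ vanishes on $\overline{\Lambda_1}$. Decomposing $H=H_{+}+H_{-}$ via the Hardy projection on $\mathbb{R}$, each $H_{\pm}$ extends to an entire function which is bounded in one half-plane and of exponential type $|I|$ in the other, hence lies in a one-sided Paley--Wiener class with critical real zero density $|I|/(2\pi)$. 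The Plemelj--Sokhotski formula, combined with $H(\Lambda_1)=0$ and the vanishing of the Cauchy integrals at $\overline{\Lambda_1}$, then forces both $H_{+}$ and $H_{-}$ to vanish on $\overline{\Lambda_1}$.

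The crucial step, in the spirit of the main result of \cite{BBB2}, is to exploit the surplus zero structure on $\Lambda_2$ via a Beurling--Malliavin multiplier. The strict density gap $2\pi\mathcal{D}_{+}(\Lambda_2)<|I|$ is exactly what allows one to produce, by the BM multiplier theorem, an entire function of strictly sub-critical exponential type vanishing on $\Lambda_2$. Incorporating this multiplier together with the double-zero data of $H$ on $\Lambda_2$ and the interpolation conditions linking $H_{+}$ and $H_{-}$ there, one builds an entire function $\Phi$ whose one-sided exponential type is strictly less than $\pi$ times the density of its real zero set. The sharp Beurling--Malliavin density theorem then forces $\Phi\equiv 0$, which unwinds successively to $H_{+}\equiv 0$, $H\equiv 0$, and finally $F\equiv 0$.

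I expect the main obstacle to be the delicate construction and deployment of the BM multiplier and the careful bookkeeping of multiplicities on $\Lambda_2$ when recombining the Hardy components of $H$: one must ensure that the double-zero contribution from $\Lambda_2$ is fully transferred to $\Phi$ after the manipulation, and that the type penalty incurred by introducing the multiplier is strictly compensated by the gap $|I|-2\pi\mathcal{D}_{+}(\Lambda_2)$. This is precisely the mechanism developed in \cite{BBB1},\cite{BBB2}, so the proof essentially amounts to verifying that our setting, with $G$ as a generator of $\mathcal{PW}_I$ and the split $\mathcal{Z}_G=\Lambda_1\cup\Lambda_2$ inherited from the spectral decomposition of $D|_L$, fits into that framework.
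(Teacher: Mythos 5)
Your strategy shares a broad philosophical affinity with the paper (reduce to a density/Beurling--Malliavin statement for entire functions of exponential type), but it diverges sharply in mechanism and, as written, has both a concrete error and a critical unproven step.

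The concrete error: you claim $H=G\tilde F$ with $\tilde F(z)=\overline{F(\bar z)}$ has a double zero on $\Lambda_2$. But orthogonality to $k_\lambda$ gives $F(\lambda)=0$, hence $\tilde F(\bar\lambda)=0$, not $\tilde F(\lambda)=0$. So $H$ vanishes at $\lambda$ (through $G$) and at $\bar\lambda$ (through $\tilde F$); these coincide only when $\lambda\in\RR$. Since $\Lambda$ is allowed to be complex in the setting of Theorem~\ref{main1}, the ``double-zero data on $\Lambda_2$'' that your construction of $\Phi$ relies on is simply not there in general. Relatedly, your first displayed orthogonality condition has the shift $\bar\lambda$ in the denominator where it should have $\lambda$ after taking conjugates; this is a small slip but it percolates into what zero set the Cauchy integral actually detects.

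The critical unproven step: everything in the paragraph beginning ``The crucial step\dots'' is a sketch of a hoped-for construction, not an argument. You invoke a BM multiplier to exploit the density gap, then say one ``builds an entire function $\Phi$ whose one-sided exponential type is strictly less than $\pi$ times the density of its real zero set'' and appeal to a ``sharp Beurling--Malliavin density theorem''. No construction of $\Phi$ is given, no accounting of how the Hardy pieces $H_\pm$ (which for $H\in L^1$ and of Paley--Wiener type are obtained by restricting the Fourier support, and then need not lie in $L^1$) are recombined, and no control of the error incurred by the multiplier. The relation between the uniform upper density $\mathcal D_+(\Lambda_2)$ and the BM (effective) density is also not addressed, yet it is exactly what the hypothesis is stated in terms of. You acknowledge this yourself, which is honest, but it means the essential content of the proposition is not proved.

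For comparison, the paper avoids half-plane decompositions of $H$ entirely. It expands the putative annihilator $h$ in the basis $\{k_n\}$, parametrizes the two families of orthogonality conditions by entire functions $S_1$, $S_2$ via \eqref{main}, and sets $V=S_1S_2$. Comparing residues shows $V(n)=(-1)^n|a_n|^2$, which yields $V=Q+R\sin\pi z$ with $R$ of zero exponential type and produces the key interpolation constraint \eqref{R0} on the real zeros of $S_2$. After a careful choice of basis to separate those zeros from $\ZZ$, the contradiction is derived by combining the Mitkovski--Poltoratski characterization of P\'olya sequences with the Makarov--Poltoratski form of the second Beurling--Malliavin theorem (Proposition~\ref{deltaprop}). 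That route is genuinely different from a Hardy-split-plus-multiplier argument, and it is where the actual work lives. To make your proposal into a proof you would need to (a) fix the multiplicity bookkeeping on $\Lambda_2$ for non-real $\lambda$, (b) specify exactly which Paley--Wiener-type class $H_\pm$ belong to and justify the zero-counting you use for them, and (c) actually construct $\Phi$ and verify the hypotheses of whichever BM-type theorem you intend to apply.
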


Let us assume for the moment that Proposition \ref{mainprop} is proved, and that  $G_\Lambda T\in \mathcal{PW}_I$.
From \eqref{inteq} it follows that $F$
is orthogonal (in $\mathcal{PW}_I$) to the family
$\bigl{\{}\frac{G_\Lambda(z)E(z)}{z-w}\bigr{\}}_{w\in\mathcal{Z}_E}$.
Apply Proposition \ref{pw} to the function $G=G_\Lambda E$,
with $\Lambda_2=\Lambda, \Lambda_1=\mathcal{Z}_E$,
to conclude that $F$ belongs to the closed span of $
\{k_\lambda\}_{\lambda\in\Lambda}$, which obviously  contradicts  the second equation in \eqref{inteq}.

\subsection{Reduction of the general case to the Hilbert space setting.}
In the general case  we only know that that $G_\Lambda T(x)=O(1+|x|^N)$ on the real line.  Put
$$
M=\{G_\Lambda H: G_\Lambda H\in \mathcal{PW}_I\}.
$$
By the previous argument we have that  $\widehat{L^{\perp}}$ is dense in $M$, hence,  it remains to prove that $M$ is dense in $\widehat{L^{\perp}_0}$. Assume the contrary.
Then there exists a function $f\in C^{\infty}(a,b)$ such that $\hat{f}\perp M$ but $(G_\Lambda T,\hat{f})\neq0$.

Let us fix a finite set $W$, $W\cap\Lambda=\emptyset$,
such that there exists a function $g$ of the form $\sum_{w\in W}c_we^{iw t}$ with the property that for  $0\leq k\leq 2N+2$, $g^{(k)}-f^{(k)}$ vanishes at the endpoints of $I$. Moreover,  it is clear that there exists $G_\Lambda F_0\in M$ such that $G_\Lambda(T+F_0)$ vanishes on $W$.   Thus $G_\Lambda(T+F_0)=G_\Lambda P_W T_1$, where $P_W$ is a polynomial vanishing on $W$. Obviously,
$(G_\Lambda P_W T_1, F)\neq 0$.

Now let  $\tilde{F}=\widehat{f-g}$,  and note that
$\tilde{F}(x)=O((1+|x|)^{-2N})$;
in particular, $\tilde{F}Q\in L^2(\mathbb{R})$, for every polynomial $Q$  of degree strictly less than $2N$.

For every entire function $U$ with  $G_\Lambda P_W U\in\mathcal{PW}_I$,
we have the system
\begin{equation}
\begin{cases}
\int_\mathbb{R}G_\Lambda(x)P_W(x)U(x)\overline{\tilde{F}(x)}dx=0,\\
\int_\mathbb{R}G_\Lambda(x)P_W(x)T_1(x)\overline{\tilde{F}(x)}dx\neq0.
\end{cases}
\end{equation}

Now  fix a function $U$ such that $T_1$ and $U$ have at least $N+2$ common zeros and $G_\Lambda U\notin \mathcal{PW}_J$ for any proper subinterval $J\subset I$  . Then write $U=QU_1$, $T_1=QT_2$, where $Q$ is a polynomial of degree $N+2$ and let $Q^*(z)=\overline{Q(\overline{z})}$.  We then have
$$
(G_\Lambda P_W T_2, \tilde{F}Q^*)\neq 0, \qquad \biggl{(}\frac{G_\Lambda P_W U_1}{z-u}, \tilde{F}Q^*\biggr{)}= 0,
\quad \text{if} \quad u\in\mathcal{Z}_{U_1},
$$
which is a system of the form \eqref{inteq}
with the set $\Lambda\cup W$  instead of $\Lambda$ and with $U_1$
instead of $E$. As we have seen in the previous subsection
this system leads to a contradiction. \qed


\section{Proof of Proposition \ref{pw}\label{step2}}
In order to complete the proof of Theorem \ref{main1} it remains to prove Proposition \ref{mainprop}. To simplify notations we assume without loss that $I=[-\pi,\pi]$ and write  $\mathcal{PW}_\pi$ instead of $\mathcal{PW}_{[-\pi,\pi]}$. We denote by  $k_\lambda$ the reproducing kernel of $\mathcal{PW}_\pi$
corresponding to the point $\lambda$, that is,
$$
k_\lambda(z) = \frac{\sin \pi (z-\overline \lambda)}{\pi(z-\overline
\lambda)}, \qquad \text{and}\quad f(\lambda) = (f,k_\lambda).
$$
Recall that for any $\gamma\in \RR$ the system
$\{k_{n+\gamma}\}_{n\in \mathbb{Z}}$
is an orthogonal basis of $\mathcal{PW}_\pi$.

\subsection{Equation for the function of zero exponential type}
The proof is similar to the proof of the main theorem in \cite{BBB1}.
Assume the contrary. Then there exists  a nonzero  $h\in\mathcal{PW}_\pi$
such that
\begin{equation}
\biggl{(}h,\frac{G(z)}{z-\lambda}\biggr{)}=0,\quad \lambda\in\Lambda_1,\qquad (h,k_\lambda)=0,\quad \lambda\in\Lambda_2.
\label{inn}
\end{equation}
We expand $h$ with respect to the orthogonal basis $\{k_n\}_{n\in\mathbb Z}$:
$$
h(z)=\sum_n\overline{a_n}k_n(z).
$$
Then the equations \eqref{inn} can be rewritten as
\begin{equation}
\begin{cases}
\sum_n\frac{a_nG(n)}{\lambda-n}=0,\quad \lambda\in\Lambda_1,\\
\sum_n\frac{(-1)^n\overline{a_n}}{\lambda-n}=0,\quad \lambda\in\Lambda_2.
\end{cases}
\end{equation}
Then there exist entire functions $S_1$ and $S_2$ such that
\begin{equation}
\begin{cases}
\sum_n\frac{a_nG(n)}{z-n}=\frac{G_1(z)S_1(z)}{\sin \pi z}\\
\sum_n\frac{(-1)^n\overline{a_n}}{z-n}=\frac{G_2(z)S_2(z)}{\sin \pi z}=\frac{h(z)}{\sin \pi z},
\label{main}
\end{cases}
\end{equation}
where $G_1$ and $G_2$ are canonical products  corresponding to $\Lambda_1,\Lambda_2$, respectively.  The functions $S_1$, $S_2$ satisfying \eqref{main} parametrize all functions orthogonal to the mixed system \eqref{mixed}.
Put $V=S_1S_2$. Comparing the residues in equations \eqref{main} at the points $n$ we get
$$V(n)=(-1)^n|a_n|^2.$$
Therefore we have the representation
$$V(z)=Q(z)+R(z) \sin \pi z,$$
where
$$
Q(z)=\sin \pi z \sum_n\frac{|a_n|^2}{z-n}
$$
and $R$ is a function of zero exponential type. Without loss of generality we can assume
that $S_1$ and $S_2$ are real on the real line (the similar formulae hold for $S_1+S^*_1$ and $S_2+S_2^*$, see \cite{BBB1}).
We know that
\begin{equation}
R(\lambda)+\sum_n\frac{|a_n|^2}{\lambda-n}=0,\quad \lambda\in\mathcal{Z}_{S_2}.
\label{R0}
\end{equation}

This is a very restrictive condition because we can start with
a basis $\{k_{n+\gamma}\}_{n\in\mathbb{Z}}$, $\gamma\in[0,1)$ which is sufficiently far from real the zeros of $S_2$  so that the Cauchy transform of the sequence $|a_n|^2$ is not very big on real zeros of $S_2$. On the other hand, there are a lot of real zeros of $S_2$ because the function $V$ has at least one zero in each interval $[n, n+1)$. In the next subsection we present this idea  in detail.

\subsection{Choice of the basis.} Choosing amongst the bases
$\{k_{n+\gamma}\},\gamma\in \mathbb{R}$, is of course equivalent
to  the corresponding translation to the functions involved. For
simplicity, we shall keep the same notations for these. Then we can find
a sufficiently small $\delta>0$ for which there exist two subsets
$\Sigma, \Sigma_1$ of the zero set $\mathcal{Z}(S_2)$ of the function
$S_2$ with the following properties:
\begin{itemize}
\begin{item}
$\Sigma$ has exactly one point in those intervals
where $\mathcal{Z}(S_2)\cap[n, n+1)\neq\emptyset$,
and
$$
\dist(x,\mathbb{Z})>\frac{\delta}{1+x^2}, \qquad x\in\Sigma;
$$
\end{item}
\begin{item}
$\Sigma_1$ has positive upper density,
and $\dist(x, \mathbb{Z})>\delta$, $x\in\Sigma_1$.
\end{item}
\end{itemize}

We need to consider three cases.
If $R$ is a nonzero polynomial, then the zeros of the
function \eqref{R0} approach $\mathbb{Z}$ and we obtain
a contradiction to the existence of $\Sigma_1$.
If $R=0$, then it is known that the density of $\Sigma_1$ is zero
\cite[Proposition 3.1]{BBB1}. Finally, if $R$ is not a polynomial,
we can divide it by $(z-z_1)(z-z_2)$, where $z_1$ and $z_2$
are two arbitrary zeros of $R$, $z_1,z_2\not\in\Sigma$,
to get a function $R_1$ of zero exponential type which is
bounded on $\Sigma$.

Next, we obtain some information on $\Sigma$.
For a discrete set $X=\{x_n\} \subset \mathbb{R}$ we
consider its counting function
$n_X(t) = {\rm card}\, \{n: x_n \in [0, t)\}$, $t\ge 0$,
and $n_X(t) = -{\rm card}\, \{n: x_n \in (-t, 0)\}$, $t<0$.
If $f$ is an entire function and $X$ is the set of its real zeros
(counted according to multiplicities), then there exists a branch
of the argument of $f$ on the real axis,
which is of the form $\arg f(t) = \pi n_X(t) +\psi(t)$, where $\psi$
is a smooth function. Such choice of the argument is unique
up to an additive constant and in what follows we always assume that the argument is chosen
to be of this form.

Denote by $\tilde u$ the conjugate function (the Hilbert
transform) of $u$,
$$
\tilde u (x)=\frac{1}{\pi} v.p.
\int_\mathbb{R} \bigg(\frac{1}{x-t} +\frac{t}{t^2+1}\bigg) u(t)dt.
$$

We use the fact that for every function $f\in\pw$ with the
conjugate indicator diagram $[-\pi,\pi]$ and all zeros
in $\overline{\mathbb{C}_+}$, one has
\begin{equation}
\label{argum}
\arg f = \pi x +\tilde{u}+c,
\end{equation}
where $u\in L^1((1+x^2)^{-1}dx)$, $c\in\mathbb{R}$.

Indeed, the function $g=e^{-i\pi z}\overline{f(\overline{z})}$ is an {\it outer} function in $\mathbb{C}^+$ and, hence, it can be represented
in the form $e^{\log|g|+i\widetilde{\log|g|}}$. Taking into account the arguments we
get  \eqref{argum}.

It follows from \eqref{main} that $GV\in\mathcal{P}W_{2\pi}$. For any $f\in\mathcal{PW}_\pi$ we put
$$f^{\#}(z)=f(z)B^-(z),$$
where $B^-(z)$ is a Blaschke product in $\mathbb{C}^-$ with zero set $\mathcal{Z}_f\cap\mathbb{C}^-$. The function $f^{\#}$ has no zeros in $\mathbb{C}^-$ and is also in $\mathcal{PW}_\pi$. So, we have
$$h^{\#}=G^{\#}_2S^{\#}_2\in\pw,\quad G^{\#}V^{\#}\in\mathcal{P}W_{2\pi}.$$
Using these inclusions and the fact that $V^{\#}$ has at least one zero in each
interval $(n,n+1)$ we find an equation
on the counting function of $\Sigma$.
In the next subsection we will show that this contradicts to the fact that
nonconstant entire function of zero exponential type is bounded on $\Sigma$.

Let us consider its representation $V^\# = V_0 H$, where the zeros
of $V_0$ are simple, interlacing with $\mathbb{Z}$ and
$V_0|_\Sigma=0$. It is clear that $\arg V_0 = \pi x+ O(1)$. Since,
$$
\arg(G^\# V^\#)=  2\pi  x + \tilde{u_1}+c,
$$
and
$$\arg(G^\#)=  \pi  x + \tilde{u_2}+c,$$
we conclude that
$$
\arg(H)= \tilde{u_3}+O(1).
$$

Consider the equality $h^\# = G_2^\# H S_2^\#/H$ and note that
$$
\arg\Big(\frac{S_2}{H}\Big)  = \pi n_\Sigma -\alpha,
$$
where $\alpha$ is some nondecreasing function on $\mathbb{R}$.
This follows from the fact that
$S_2^\#/H$ vanishes only on a subset of the real axis which contains $\Sigma$.
Applying the representation (\ref{argum}) to $h^\#$, we conclude that
\begin{equation}
\label{n15}
\arg G_2^\# + \pi n_\Sigma(x) = \pi x + \tilde{u}+ v + \alpha,
\end{equation}
where $u\in L^1((1+x^2)^{-1}dx)$, $v\in L^\infty(\mathbb{R})$,
and $\alpha$ is nondecreasing.

Using the fact that the upper density of $\Lambda_2$ is less than $\pi$ we get an equation

\begin{equation}
\label{n16}
\pi n_\Sigma(x) = \pi \varepsilon x + \tilde{u}+ v + \alpha_1,\quad \varepsilon >0,
\end{equation}
and $\alpha_1$ is nondecreasing.

Summing up, we have an entire function $R$ of zero exponential type which is not a polynomial, and which is bounded
on a set $\Sigma\subset\mathbb R$ satisfying \eqref{n16}.

\subsection{Beurling--Malliavin meets P\'olya}
To deduce a contradiction from \eqref{n16}, we use some information on the classical P\'olya problem
and on the second Beurling--Malliavin theorem.
We say that a sequence $X=\{x_n\} \subset \mathbb{R}$ is a {\it P\'olya sequence}
if any entire function of zero exponential type which is bounded on $X$
is a constant. We say that a disjoint sequence of intervals
$\{I_n\}$ on the real line is \itshape a long sequence of intervals
\normalfont if
$$
\sum_n\frac{|I_n|^2}{1+\dist^2(0,I_n)}=+\infty.
$$

A complete solution of the P\'olya problem was obtained by
Mishko Mitkovski and Alexei Poltoratski \cite{mp}. In particular a separated sequence
$X\subset\mathbb{R}$ is not a P\'olya sequence if and only
if there exists a long sequence of intervals $\{I_n\}$ such that
$$
\frac{\card(X \cap I_n)}{|I_n|}\rightarrow 0.
$$

Applying this result to our $R$ and $\Sigma$ (formally speaking, $\Sigma$ is not a separated sequence but by construction it is a union of two separated
sequences which are interlacing), we find a long system
of intervals $\{I_n\}$ such that
$$
\frac{\card(\Sigma\cap I_n)}{|I_n|}\rightarrow0.
$$

Given $I=[a,b]$, denote $I^-=[a,(2a+b)/3]$, $I^+=[(a+2b)/3,b]$,
$$
\Delta^*_I = \inf_{I^+}[\pi \varepsilon x -\pi n_\Sigma(x) +v] -
\sup_{I^-}[\pi\varepsilon x - \pi n_\Sigma(x) +v].
$$
Now, for a long system of intervals $\{I_n\}$ and for some $c>0$
we have
$$
\Delta^*_{I_n}\geq c|I_n|.
$$

Next we use a version of the second Beurling--Malliavin theorem
given by N. Makarov and A. Poltoratski in \cite{pm}.

Combining Proposition 3.13 and Theorem 5.9 from \cite{pm} we get

\begin{proposition} Suppose $\gamma\in C(\mathbb{R})$.
If there exists $c>0$ and a long system of intervals $I_n$ such that
\begin{equation}
\Delta^*_{I_n}[\gamma]\geq c|I_n|,
\label{delta}
\end{equation}
then $\gamma$ cannot be represented as $\alpha+\widetilde{h}$,
where $\alpha$ is decreasing and $h\in L^1((1+x^2)^{-1}dx)$.
\label{deltaprop}
\end{proposition}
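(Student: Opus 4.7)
The plan is to deduce Proposition~\ref{deltaprop} directly from the two cited results of Makarov--Poltoratski, namely Proposition~3.13 and Theorem~5.9 of~\cite{pm}, which together give a functional reformulation of the second Beurling--Malliavin theorem tailor-made for the quantity $\Delta^*$.

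First, I would invoke Proposition~3.13 of~\cite{pm}. This result expresses a Beurling--Malliavin type ``energy'' of a continuous real function $\gamma$ on the line as a supremum of $\Delta^*_I[\gamma]/|I|$ taken over long systems of intervals. Under the hypothesis $\Delta^*_{I_n}[\gamma]\geq c|I_n|$ on a long sequence $\{I_n\}$, this energy is automatically bounded below by $c$, so in particular it is strictly positive.

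Next, I would apply Theorem~5.9 of~\cite{pm}, which is the functional form of the second Beurling--Malliavin theorem in the Makarov--Poltoratski setting. After a change of sign ($\gamma \leadsto -\gamma$, $\alpha \leadsto -\alpha$, so that the decreasing $\alpha$ of the present statement becomes the increasing ``$\alpha$'' of~\cite{pm}), this theorem characterises the class of continuous functions representable as $\alpha + \widetilde{h}$ with $\alpha$ monotone and $h\in L^1((1+x^2)^{-1}dx)$ as exactly those $\gamma$ whose Beurling--Malliavin energy vanishes. Combining the two ingredients, any representation $\gamma = \alpha + \widetilde{h}$ with $\alpha$ decreasing and $h\in L^1((1+x^2)^{-1}dx)$ would force the energy to vanish, contradicting the strictly positive lower bound obtained in the first step. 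The desired nonrepresentability then follows by contraposition.

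The main obstacle will be, as is typical for a result extracted from~\cite{pm}, purely bookkeeping. The statements in~\cite{pm} are phrased in the language of meromorphic inner functions and their arguments, with sign and normalisation conventions adapted to the de~Branges/Krein setting; one has to verify carefully that the notion of long system of intervals and the precise shape of $\Delta^*_I$ used in the present excerpt match (up to bounded perturbations) the ones in~\cite{pm}, and that passing from increasing to decreasing $\alpha$ is indeed just a sign change at the level of both $\gamma$ and $h$. Once these translations are in place, the combination of Proposition~3.13 and Theorem~5.9 of~\cite{pm} is immediate and the proposition follows with no additional analytic input.
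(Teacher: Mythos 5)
Your proposal matches the paper's own treatment: the paper offers no independent proof of this proposition, but derives it in a single line by citing Proposition~3.13 and Theorem~5.9 of Makarov--Poltoratski and saying ``combining'' them gives the result. Your write-up simply fills in what that combination amounts to (energy lower bound from the $\Delta^*$ hypothesis, vanishing-energy characterisation of the representable class, and a sign change from decreasing to increasing $\alpha$), so it is the same route with a bit more bookkeeping made explicit.
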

If we apply this for the function $\pi\varepsilon x - \pi n_\Sigma(x) +v$
we arrive to a contradiction. Proposition \ref{mainprop}
is completely proved.
\qedsymbol

\begin{remark}
We close this subsection with an additional explanation of  the
result in Proposition \ref{deltaprop}. Assume that
$|I_n|=o(\dist(0, I_n))$. Let $h_n=h\chi_{10I_n}$ be a restriction
of function $h$ onto interval $10I_n$ (the interval of the length
$10|I_n|$ and with the same center as $I_n$). If the inequality
\eqref{delta} holds for $h_n$, then the Kolmogorov theorem states
that there exists $c$ such that
$$\int_{|\widetilde{h_n}|>A}\frac{dx}{1+x^2}\leq \frac{c}{A}\int_{\mathbb{R}}\frac{|h_n(x)|dx}{1+x^2},\quad A>1.$$
Choosing $A=\varepsilon|I_n|$, $\varepsilon>0$ we have
$$\frac{|I_n|^2}{1+\dist^2(0,I_n)}\leq \frac{3c}{\varepsilon} \int_{\mathbb{R}}\frac{|h_n(x)|dx}{1+x^2}.$$
Summing up over $n$ we get the contradiction
$$\infty=\sum_n\frac{|I_n|^2}{1+\dist^2(0,I_n)}\leq C_1\sum_n\int_{\mathbb{R}}\frac{|h_n(x)|dx}{1+x^2}\leq C_2\int_{\mathbb{R}}\frac{|h(x)|dx}{1+x^2}<\infty.$$
We refer to \cite{pm} for the details.
\end{remark}

\subsection{A reformulation in terms of an approximation result.} Given a distribution
$\varphi$ and  $w\in \mathbb{C}$ which is a zero of order $n$ of
its Fourier transform $\hat{\varphi}$, we denote by
$\varphi_{w,k}$, $1\le k\le n$, the distributions with Fourier
transforms
$$\hat{\varphi}_{w,k}(z)=\frac{\hat{\varphi}(z)}{(z-w)^k}.$$ As a
consequence of the proof of Theorem \ref{main1} we have the
following result.
\begin{corollary}\label{approxres}
Let $\varphi$ be a compactly supported distribution, let $I$ be
the convex hull of its support, and let $\Lambda$ be a subset of
the zero set of $\hat{\varphi}$ (counting multiplicities).
If$$D^+(\Lambda)<\frac{|I|}{2\pi},$$ then every distribution
$\psi$ with support in $I$ whose Fourier transform vanishes on
$\Lambda$, lies in the weak-star closure of the linear span of
$\{\varphi_{w,k}:~w\notin \Lambda\}$.
\end{corollary}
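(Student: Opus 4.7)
The plan is to deduce Corollary~\ref{approxres} directly from Theorem~\ref{main1} by constructing an auxiliary $D$-invariant subspace tailored to $\varphi$ and $\Lambda$. Fix any open interval $(a,b)\supset I$ and let $L\subset C^\infty(a,b)$ be the closed subspace whose annihilator $L^\perp$ in $S(a,b)$ is the weak-$*$ closed linear span of $\{\varphi\}$ together with all $\varphi_{w,k}$ for which $w$ is a zero of $\hat\varphi$ and the index $k$ exceeds the multiplicity of $w$ in $\Lambda$ (and is bounded by the order of the zero). The program is then to verify the hypotheses of Theorem~\ref{main1} for this $L$ and to read off the conclusion.

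First I would check that $L$ is $D$-invariant. Using $\widehat{D\mu}(z)=-iz\hat\mu(z)$ together with the algebraic identity
\[
z\cdot\frac{\hat\varphi(z)}{(z-w)^k}=\frac{\hat\varphi(z)}{(z-w)^{k-1}}+w\cdot\frac{\hat\varphi(z)}{(z-w)^k},
\]
one sees that the Fourier image of $L^\perp$ is stable under multiplication by $z$, which is exactly $D$-invariance on the primal side. Next I would identify the spectral data of $L$. By Paley--Wiener each $\hat\varphi_{w,k}$ lies in $\mathcal{H}_I$, so $L\supset L_I$; and since $\varphi\in L^\perp$ with $\conv\supp\varphi=I$, the residual interval of $L$ must equal $I$. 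Evaluating $\varphi_{w,k}$ on a monomial exponential $x^je^{\lambda x}$ via $\hat\varphi_{w,k}(z)=\hat\varphi(z)/(z-w)^k$ shows that $x^je^{\lambda x}\in L$ precisely when the corresponding Fourier point lies in $\Lambda$ with multiplicity strictly greater than $j$. Hence $\sigma(D|_L)$ is identified with $\Lambda$ as a multiset and $\mathcal{D}_+(\sigma(D|_L))=D^+(\Lambda)<|I|/(2\pi)$, so Theorem~\ref{main1} applies and gives $L=\overline{L_I+\mathcal{E}(L)}$.

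Finally, to conclude it suffices to show $\psi\in L^\perp$, i.e., that $\psi$ annihilates both $L_I$ and $\mathcal{E}(L)$. The first is immediate from $\supp\psi\subset I$, and the second follows from the hypothesis that $\hat\psi$ vanishes on $\Lambda$ with multiplicity, combined with the identification of $\mathcal{E}(L)$ obtained above. The main obstacle will be the multiplicity bookkeeping in the spectrum computation: one must interpret ``$w\notin\Lambda$'' and the range of $k$ with sufficient care that $\sigma(D|_L)$ equals $\Lambda$ exactly as a multiset, since the density hypothesis of Theorem~\ref{main1} is sensitive to multiplicities. Once this is set up correctly, the rest is a routine Paley--Wiener argument in the spirit of the duality framework already developed in Section~\ref{step1}.
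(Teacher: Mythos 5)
The paper presents Corollary~\ref{approxres} as ``a consequence of the \emph{proof}'' of Theorem~\ref{main1}, not of its statement: the argument in Sections~\ref{step1}--\ref{step2} is re-run with $\varphi$ playing the role of the maximal-support annihilating distribution $\phi$, and what one extracts is that the functions $\hat\varphi/(z-w)^k$, $w\notin\Lambda$, \emph{alone} span a weak-star dense subset of $\{F\in\mathcal{H}_I : F|_\Lambda=0\}$ (via the orthogonality argument in Proposition~\ref{mainprop} and the reduction in~2.3). Your proposal instead tries to invoke Theorem~\ref{main1} as a black box by fabricating an auxiliary $D$-invariant $L$; this is a genuinely different route, but it has a gap that I do not see how to close.

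The difficulty is $D$-invariance of your $L$. On the Fourier side, $D$-invariance of $L$ is equivalent to $\widehat{L^\perp}$ being closed under multiplication by $z$. Your identity $z\hat\varphi_{w,k} = \hat\varphi_{w,k-1} + w\,\hat\varphi_{w,k}$ handles all $k\ge 2$ and, for $k=1$, gives $z\hat\varphi_{w,1}=\hat\varphi+w\hat\varphi_{w,1}$; but nothing in the identity produces $z\hat\varphi$ from the generating set, and $z\hat\varphi$ (being of a higher growth class than $\hat\varphi$ and all $\hat\varphi_{w,k}$) is not a finite linear combination of the generators. If you enlarge $L^\perp$ to the full $D$-invariant span, i.e.\ adjoin $z^j\hat\varphi$ and $z^j\hat\varphi_{w,k}$ for all $j$, then Theorem~\ref{main1} does apply, but the conclusion you obtain is only that $\psi$ lies in the $D$-invariant weak-star closure, which a priori is strictly larger than the weak-star closure of the linear span of $\{\varphi_{w,k}:w\notin\Lambda\}$. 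Showing that the two closures coincide is exactly equivalent to showing that $\varphi$ itself (hence $z^j\varphi$) lies in the closure of $\operatorname{span}\{\varphi_{w,k}\}$, which is just the corollary's statement for $\psi=\varphi$ --- so the argument becomes circular. The paper avoids this by never needing the intermediate $L$: it proves the density of $\operatorname{span}\{\hat\varphi_{w,k}\}$ in $\widehat{L_0^\perp}$ directly. A secondary issue is the multiplicity bookkeeping: you propose to take $\varphi_{w,k}$ with $k$ \emph{exceeding} the multiplicity $m(w)$ of $w$ in $\Lambda$ (up to the order $n(w)$ of $w$ in $\hat\varphi$), but for $\sigma(D|_L)$ to equal $\Lambda$ as a multiset one needs the common zero of $\widehat{L^\perp}$ at $w$ to have order exactly $m(w)$, which forces the range $1\le k\le n(w)-m(w)$; including $k>n(w)-m(w)$ kills the eigenvalue $w$ entirely, making $\sigma(D|_L)$ smaller than $\Lambda$.
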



\section{Proof of Theorem \ref{main2}\label{examp}}

Let $\PP$ be the set of all polynomials.
\begin{lemma}
\label{debr}
There exists a sequence $\Lambda \in \RR$ of density $\pi$
with the generating function $G_\Lambda$ and an entire function $S$ such that
the following three conditions hold\textup:

$(i)$  $G_\Lambda \PP \subset \pw$ and $G_\Lambda S \in \pw$\textup;

$(ii)$ $G_\Lambda S = \sum_{n\in \Z} a_n k_n$ and for any $N>0$
we have $a_n = o(n^{-N})$, $|n|\to\infty$\textup;

$(iii)$ $G_\Lambda S$ is orthogonal to $G_\Lambda \PP$ in $\pw$.
\end{lemma}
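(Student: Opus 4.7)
The plan is to translate (iii) into a discrete moment identity and then construct the data $(\Lambda, G_\Lambda, S)$ in that order, closely following the method of \cite{BBB1}. Since $\Lambda\subset\RR$, we normalise $G_\Lambda$ to be real on $\RR$; using that $\{k_n\}_{n\in\Z}$ is an orthonormal basis of $\pw$ and $f(n)=(f,k_n)$, the orthogonality of $G_\Lambda S$ to every $G_\Lambda p$, $p\in\PP$, is equivalent to
\begin{equation*}
\sum_{n\in\Z} |G_\Lambda(n)|^2\, S(n)\, n^k \;=\; 0, \qquad k=0,1,2,\ldots,
\end{equation*}
i.e.\ all power-moments of the sequence $c_n:=|G_\Lambda(n)|^2 S(n)$ vanish. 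Together with (ii), this reduces the lemma to manufacturing a rapidly decreasing moment-vanishing sequence $(c_n)$ and factoring it as $c_n=|G_\Lambda(n)|^2 S(n)$ with $S$ entire.

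Such a $(c_n)$ is easy to produce: take $c_n=\hat\psi(n)$ for a smooth $2\pi$-periodic $\psi$ vanishing to infinite order at $0$ (for instance $\psi(\xi)=\exp(-\csc^2(\xi/2))$, extended by $0$ at $0$). Then $c_n=O(|n|^{-N})$ for every $N>0$, and $\sum_n c_n n^k=0$ for every $k\ge 0$. Given a candidate $\Lambda$ whose canonical product $G_\Lambda$ is rapidly decreasing on $\RR$ but satisfies a polynomial lower bound $|G_\Lambda(n)|\gtrsim(1+|n|)^{-M}$ along $\Z$, set $a_n:=c_n/\overline{G_\Lambda(n)}\in\ell^2$ and define $h\in\pw$ by the sampling series $h=\sum_n a_n k_n$. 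One then checks $(h,G_\Lambda p)_{\pw}=\sum_n c_n\overline{p(n)}=0$ for every $p\in\PP$, and $a_n$ decays faster than any polynomial. The remaining condition is that $h|_\Lambda=0$, so that $S:=h/G_\Lambda$ is entire; this is an additional countable family of linear constraints on $\psi$, which can be enforced because, at density $\pi$, the system $\{k_\lambda\}_{\lambda\in\Lambda}$ can be arranged to span a proper subspace of $\pw$.

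The heart of the matter, and the main obstacle, is the construction of $\Lambda\subset\RR$ of density exactly $\pi$ whose canonical product $G_\Lambda$ simultaneously (a) lies in $\pw$ and decays faster than any polynomial on $\RR$ (so that $G_\Lambda\PP\subset\pw$), (b) admits a polynomial lower bound $|G_\Lambda(n)|\gtrsim(1+|n|)^{-M}$ along $\Z$, and (c) produces an incomplete system $\{k_\lambda\}_{\lambda\in\Lambda}$ in $\pw$. These requirements are in tension: super-polynomial decay of $G_\Lambda$ on $\RR$ forces $\Lambda$ close to $\Z$, which threatens both (b) and (c). The remedy, as in \cite[Thm.\ 1.3]{BBB1}, is to set $\lambda_n=n+\epsilon_n$ where $(\epsilon_n)$ is a Blaschke-type perturbation of $\Z$ engineered so that $\arg(G_\Lambda/\sin\pi z)$ equals the Hilbert transform of a prescribed logarithmic envelope. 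This simultaneously produces super-polynomial decay of $|G_\Lambda|$ on $\RR$ while preserving the lower bound on $|G_\Lambda(n)|$ and the incompleteness of $\{k_\lambda\}_{\lambda\in\Lambda}$. Once such $\Lambda$ is in hand, items (i)--(iii) follow from the computations above.
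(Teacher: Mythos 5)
Your reduction of $(iii)$ to the vanishing of the power moments of $c_n=(G_\Lambda S)(n)\overline{G_\Lambda(n)}$ is correct, and it is in essence the same identity the paper extracts by evaluating the interpolation formula $\frac{P(z)S_1(z)}{\sin\pi z}=\frac{1}{\pi}\sum_n\frac{P(n)a_nG_\Lambda(n)}{z-n}$ at a zero of $P$. From there, however, your sketch stops short of a proof. Your key sentence --- that the countable family of constraints $h(\lambda)=0$, $\lambda\in\Lambda$, "can be enforced because $\{k_\lambda\}_{\lambda\in\Lambda}$ can be arranged to span a proper subspace of $\pw$" --- is not an argument: incompleteness of $\{k_\lambda\}$ only gives \emph{one} nonzero $h\in\pw$ vanishing on $\Lambda$, and says nothing about whether the infinitely many linear conditions coming from $h|_\Lambda=0$, the moment-vanishing, and the super-polynomial decay of $(a_n)$ have a simultaneous nontrivial solution in your admissible class of $\psi$. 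In fact the three requirements $h\in\pw$, $h\perp G_\Lambda\PP$, $h\perp\{k_\lambda\}_{\lambda\in\Lambda}$ are together equivalent to the incompleteness of $G_\Lambda\PP\cup\{k_\lambda\}_{\lambda\in\Lambda}$ in $\pw$, which is precisely the content of the lemma; you have reformulated it, not established it. Your "heart of the matter" paragraph likewise names a candidate form $\lambda_n=n+\epsilon_n$ and identifies the tension between (a) and (b) but gives no estimate showing that the tension can be resolved.

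The paper closes this gap by a structural choice your proposal does not contain: it takes $\Lambda\subset\Z$. Explicitly, $G_\Lambda(z)=\sin\pi z/(U(z)V(z))$ with $U(z)=\sin(\pi\sqrt{z})/(\pi\sqrt z)$ (zeros at the squares) and $V$ a lacunary product, so $\Lambda=\Z\setminus(\ZL_U\cup\ZL_V)$ and $\ZL_U\cup\ZL_V$ has density zero in $\Z$. The coefficients are then put to zero on $\Lambda$ by fiat, $a_n=0$ for $n\in\Lambda$ and $a_n=(-1)^nT(n)$ on $\ZL_U\cup\ZL_V$ for an explicit entire $T$; since $(G_\Lambda S)(n)=a_n$ and $\Lambda\subset\Z$, the vanishing of $G_\Lambda S$ on $\Lambda$ is automatic rather than an extra system of constraints. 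Orthogonality $(iii)$ is then verified through the interpolation identity \eqref{nb1}, proved by a Liouville-type argument, and the decay $(ii)$ comes from the explicit growth of the denominator $W$ in $T$. Note that this construction directly violates your hypothesis (b): $G_\Lambda(n)=0$ for $n\in\Lambda\subset\Z$. Your proposal requires $\Lambda\cap\Z=\emptyset$, a genuinely different construction, and precisely the unresolved balance between super-polynomial decay of $|G_\Lambda|$ on $\RR$ and a polynomial lower bound on $|G_\Lambda(n)|$ is what the paper avoids by letting the zeros of the numerator $\sin\pi z$ swallow those of $UV$ on $\RR$.
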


Assume for the moment that Lemma \ref{debr} is proved. We first show how to deduce the
counterexample of Theorem \ref{main2} from this lemma.
\bigskip
\\
{\it Proof of Theorem \ref{main2}.}
Assume that $\Lambda$ and $S$ are constructed. Put
$$
M = \big\{f\in L^2(-\pi, \pi):\ \hat f \in G_\Lambda \PP  \big\}
$$
(recall that any function $F\in \pw$ is of the form $F = \hat f$
for some $f\in L^2(-\pi, \pi)$).
Of course, each element $f\in M$ defines a continuous linear functional on
$C^\infty(-2\pi, 2\pi)$ by
$$
\phi_f(h) = \int_{-\pi}^\pi h(t) \overline{f(t)} dt, \qquad h \in C^\infty(-2\pi, 2\pi).
$$
The functional $\phi_f$ is well defined since $f(t)\equiv 0$, $|t|\ge \pi$.

Now let
$$
L = M^\perp = \{h\in C^\infty(-2\pi, 2\pi):\ \phi_f(h) = 0, \, f\in M\}.
$$
By the construction, $L$ is a closed subspace of
$C^\infty(-2\pi, 2\pi)$ and
$$
\{f\in C^\infty(-2\pi,2\pi): f|_{[-\pi, \pi]} \equiv 0\} \subset L.
$$
Also, since the set of common zeros of $\widehat{L^\perp}$ coincides with $\Lambda$
we have $\sigma(D|_L) = \Lambda$.

Let us show that $L$ is $D$-invariant. Let $h\in L$.
We need to show that $\int_{-\pi}^\pi h'(t)\overline{f(t)}dt =0$ for any $f\in M$.
Since $f$ vanishes outside $[-\pi, \pi]$, the integral depends only on the values of
$h$ inside this interval. Thus we may assume without loss of generality that
$\supp h \subset (-\pi-\vep, \pi+\vep)$ for some small $\vep>0$.
Therefore both $F = \hat f$ and $H = \hat h$ are rapidly decaying functions and we have
$$
\int_{-\pi}^\pi h'(t) \overline{f(t)} dt = \int_\RR x H(x)\overline{F(x)}dx.
$$
We have $F = G_\Lambda P$ for some polynomial $P$. Then $xF(x) =  xP(x)G_\Lambda(x)
= \hat f_1$ for some $f_1\in M$. Hence,
$$
\int_\RR x H(x) \overline{F(x)} dx = \int_{-\pi}^\pi h(t) \overline{f_1(t)}dt = 0,
$$
since $h\in M^\perp$. We have seen that $L$ is $D$-invariant.

Now we construct a continuous functional $\phi$ on $L$ such that
$\phi|_{L_0} = 0$, but $\phi|_L \ne 0$, where, as in the proof of the first theorem,
$$L_0= \overline{L_{res}+\mathcal{E}(L)}.$$
Let $h_0\in L^2(-\pi, \pi)$ be such that $\widehat{\overline{h_0}} = G_\Lambda S$.
Recall that $G_\Lambda S = \sum_{n\in \Z} a_n k_n$ where $a_n = o(n^{-N})$ for any $N>0$.
Hence, $\overline{h_0(t)} = \sum_{n\in \Z} a_n e^{int}$ and, by
the fast decay of $a_n$ we conclude
that $h_0$ is a $C^\infty$ function in the {\it closed} interval $[-\pi, \pi]$.
Denote by $h$ some function in $C^\infty(-2\pi, 2\pi)$ such that
$h|_{[-\pi, \pi]} = h_0$.

Consider the functional $\phi$ on $L$,
$\phi(g) = \int_{-\pi}^\pi g(t)\overline {h_0(t)} dt$. It is clear that $\phi$
annihilates the set $\{g\in C^\infty(-2\pi,2\pi): g|_{[-\pi, \pi]} \equiv 0\}$.
Also, $\phi(e^{i\lambda t}) = \widehat{\overline{h_0}}(\lambda) = 0$.
Thus, $\phi$ annihilates $L_0$.

Let us show that $h \in L$. Since  $\phi(h) = \int_{-\pi}^\pi |h_0(t)|^2 dt >0$,
we then conclude that $L\ne L_0$. Indeed, for any $f\in M$
we have
$$
\begin{aligned}
\phi_f(h) & = \int_{-\pi}^\pi h(t)\overline{f(t)} dt
= \int_{-\pi}^\pi h_0(t)\overline{f(t)} dt \\
& = \int_\RR \hat h_0(x)\overline{\hat{f}(x)} dx = 0,
\end{aligned}
$$
since $\hat h_0 = G_\Lambda S$, $\hat f = G_\Lambda P$ for some polynomial $P$,
and $G_\Lambda S \perp G_\Lambda \PP$ in $L^2(\RR)$
by the construction of Lemma \ref{debr}.
Thus we have constructed a functional which separates $L_0$ and $L$.
\qed
\bigskip
\\
{\it Proof of Lemma \ref{debr}.}
Recall that we denote by $\ZL_F$ the zero set of an entire
function $F$ (all functions involved will have simple zeros). Let
$$
U(z) = \frac{\sin (\pi \sqrt{z})}{\pi \sqrt{z}} = \prod_{n\in \N}
\bigg(1-\frac{z}{n^2}\bigg),
$$
and let $V$ be some product with very lacunary zeros, say
$$
V(z) = \prod_{n\in \N} \bigg(1-\frac{z}{2^{2n}+1}\bigg).
$$
Put
$$
G_\Lambda(z) = \frac{\sin \pi z}{U(z)V(z)}.
$$
Note that $UV$ tends to infinity faster than any power along $\RR$
except some small neighborhoods of the zeros of $UV$. Therefore, using, e.g.,
Plancherel--P\'olya theorem one can easily show that $G_\Lambda P \in \pw$
for any polynomial $P$.

Let us introduce two more entire functions
$$
T(z) = \frac{\sin (\pi \sqrt{z+1/2}/10)}{W(z) \sqrt{z+1/2}}, \qquad
W(z) = \prod_{n\in \N} \bigg(1-\frac{z}{100\cdot 2^{2n}-1/2}\bigg)
$$
(note that the zeros of the nominator are exactly of the form $100n^2-1/2$, $n\in \N$).

Now we define the sequence $a_n$ by $a_n :=0$ for $n\notin \ZL_U\cup\ZL_V$ and
$$
a_n:= (-1)^n T(n), \qquad n \in \ZL_U\cup\ZL_V.
$$
It is easy to see that $T(n) = o(n^{-N})$ for any $N$
when $n\to +\infty$, since $\sin(\pi \sqrt{x})$ is bounded for $x>0$ and $W(n)$
grows faster than any power along $\N$.

Now we define the function $S$ by the formula
\begin{equation}
\label{nb}
\frac{G_\Lambda(z)S(z)}{\sin \pi z} = \frac{1}{\pi}
\sum_{n\in \mathbb{Z}}\frac{(-1)^n a_n}{z-n}.
\end{equation}
Thus $G_\Lambda S = \sum_{n\in \Z} a_n k_n$, where $k_n$ are the elements of the
orthogonal basis of reproducing kernels of $\pw$ (the Shannon--Kotelnikov formula),
and so $G_\Lambda S \in \pw$. This proves $(i)$ and $(ii)$, since
$\{a_n\}$ has a fast decay.

Note that the summation goes only along $n \in \ZL_U\cup\ZL_V$, and we can rewrite
\eqref{nb} as the following interpolation formula:
$$
\frac{S(z)}{U(z)V(z)} = \frac{1}{\pi} \sum_{n \in \ZL_U\cup\ZL_V} \frac{T(n)}{z-n}.
$$
Next we put $S_1 = G_\Lambda T$. We need to show that the following
interpolation formula also holds:
$$
\frac{S_1(z)}{\sin \pi z} = \frac{1}{\pi}\sum_{n\in \Z} \frac{a_n G_\Lambda(n)}{z-n}.
$$
This formula can be rewritten in the following way using the fact that
$a_n = (-1)^n T(n)$, and $G_\Lambda(n) = \pi (-1)^n \big((UV)'(n)\big)^{-1}$,
$n \in \ZL_U\cup\ZL_V$:
\begin{equation}
\label{nb1}
\frac{T(z)}{U(z)V(z)} = \sum_{n\in \ZL_U\cup\ZL_V} \frac{T(n)}{(UV)'(n)(z-n)}.
\end{equation}

We have already mentioned that $T(n)$ decays faster than any power
when $n\to\infty$. It is also easy to see that
$|(UV)'(n)|\to \infty$ when $n\to\infty$ and $n\in \ZL_U\cup\ZL_V$.
Since $V$ is a lacunary product it
is clear that $|V(n)|$ is large for $n \in \ZL_U$ and $|V'(n)|$ is large for
$n\in \ZL_V$, while $|U(n)|$, $n\in \ZL_V$ and $|U'(n)|$, $n\in \ZL_U$,
have a power-type below estimate in $n$. Thus, the series in the right-hand side
of \eqref{nb1} converges uniformly on compact sets separated from the poles.

Clearly the residues coincide, and so the difference between the left
and the right-hand side of \eqref{nb1} (let us denote it by $H$)
is an entire function. Obviously. $H$ is of zero exponential type. It remains
to notice that the function $T/(UV)$
in the left-hand side tends to zero along the imaginary axis
(for the function on the right this is obvious), thus $|H(iy)|\to 0$, $|y|\to \infty$,
whence $H\equiv 0$.

By exactly the same arguments we may show that for any $P\in \PP$,
$$
\frac{P(z) S_1(z)}{\sin \pi z} = \frac{1}{\pi}\sum_{n\in \Z}
\frac{P(n) a_n G_\Lambda(n)}{z-n}.
$$
It remains to verify that the function $G_\Lambda S = \sum_{n\in \Z} a_n k_n$
is orthogonal to all functions of the form $z^k G_\Lambda $, $k\in \Z_+$.
Let $a\notin \Z$ and let $P(z) =(z-a)z^k$. Since $k_n$
are the reproducing kernels of $\pw$, we have
$$
(z^k G_\Lambda, G_\Lambda S) =
\frac{1}{\pi}\sum_{n\in \Z} n^k a_n G_\Lambda(n) =
\frac{1}{\pi} \sum_{n\in \Z} \frac{P(n) a_n G_\Lambda(n)}{n-a} =
\frac{P(z)S_1(z)}{\sin \pi z}\bigg|_{z=a} = 0.
$$
This proves $(iii)$ and completes the proof of the lemma.
\qed


{ \section{Subspaces with non-compact residual interval \label{nores}}

In this section we prove Theorem \ref{main3}.

We begin with a simple observation related to the absence
of the residual part. Given a distribution $\varphi$ compactly
supported on $(a,b)$ we denote throughout this section by
$I(\varphi)$ the convex hull of its support
and by $|I(\varphi)|$ its length. Note that if $L$ has a residual interval $I$ strictly contained in $(a,b)$ then all distributions in the annihilator of $L$ are supported on this interval, and clearly,
$$
\sup \{|I(\varphi)|:~\varphi\in L^\perp\}=|I|.
$$
the next lemma shows that this observation remains true when $I=(a,b)$ as well.}
\begin{lemma}
If $\sigma(D|_L)$ is an infinite discrete subset of $\mathbb{C}$
and $L_{res}=\{0\}$, then
$$
\sup \{|I(\varphi)|:~\varphi\in L^\perp\}=b-a,
$$
where $b-a=\infty$ if $(a,b)$ has infinite length.
\label{lemma1}
\end{lemma}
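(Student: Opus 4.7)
The plan is to argue by contradiction: suppose $s := \sup\{|I(\varphi)| : \varphi \in L^\perp\} < b-a$ and derive a contradiction. For $\varphi \in L^\perp$, write $\alpha(\varphi) := \inf I(\varphi)$ and $\beta(\varphi) := \sup I(\varphi)$. The strategy has two stages: first, show that $\inf_\varphi \alpha(\varphi) = a$ and $\sup_\varphi \beta(\varphi) = b$; then combine two distributions whose supports lie near opposite endpoints of $(a,b)$ to produce an element of $L^\perp$ whose convex support hull exceeds length $s$.

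For the first stage, assume for contradiction that $a_0 := \inf_\varphi \alpha(\varphi) > a$. Then $\supp \varphi \subset [a_0, b)$ for every $\varphi \in L^\perp$, so $L^\perp$ is contained in the space of distributions supported in $[a_0, b)$. Taking annihilators yields $L = (L^\perp)^\perp \supset L_{[a_0, b)}$, since the annihilator of the space of distributions supported in a closed set $K \subset (a,b)$ consists precisely of smooth functions vanishing to infinite order on $K$. By the classification of residual subspaces in \cite{alkor}, $L_{[a_0, b)}$ is residual. Any $D$-invariant subspace $M \subset L$ with empty spectrum satisfies $(D-\lambda)M = M$ for every $\lambda$, hence $p(D)M = M \subset p(D)L$ for every polynomial $p$, forcing $M \subset \bigcap_p p(D)L = L_{res}$. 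Applying this to $M = L_{[a_0, b)}$ gives $L_{[a_0, b)} \subset L_{res} = \{0\}$, contradicting the existence of nonzero bump functions in $C_c^\infty((a, a_0)) \subset L_{[a_0, b)}$. The equality $\sup_\varphi \beta(\varphi) = b$ is proved symmetrically.

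For the second stage, since $b-a > s$, choose $a' < b'$ in $(a,b)$ with $b' - a' > s$, and by the first stage pick $\varphi_1, \varphi_2 \in L^\perp$ with $\alpha(\varphi_1) < a'$ and $\beta(\varphi_2) > b'$. The bounds $|I(\varphi_j)| \leq s$ together with $b' > a' + s$ yield
$$
\alpha(\varphi_1) < a' < b' - s < \alpha(\varphi_2), \qquad \beta(\varphi_1) < a' + s < b' < \beta(\varphi_2).
$$
In particular $\alpha(\varphi_1) \notin \supp \varphi_2$ and $\supp \varphi_2$ is closed, so $\varphi_2$ vanishes in a neighborhood of $\alpha(\varphi_1)$; there $\varphi_1 + \varphi_2 = \varphi_1$, so $\alpha(\varphi_1) \in \supp(\varphi_1 + \varphi_2)$ and hence $\alpha(\varphi_1 + \varphi_2) = \alpha(\varphi_1)$. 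Symmetrically $\beta(\varphi_1 + \varphi_2) = \beta(\varphi_2)$, giving $|I(\varphi_1 + \varphi_2)| = \beta(\varphi_2) - \alpha(\varphi_1) > b' - a' > s$, which contradicts $\varphi_1 + \varphi_2 \in L^\perp$. The key difficulty is the first stage, since it rests jointly on the \cite{alkor} characterization of residual subspaces as the $L_I$'s and on the maximality of $L_{res}$ among $D$-invariant residual subspaces of $L$; together these force every $L_I$ sitting inside $L$ to be trivial whenever $L_{res} = \{0\}$.
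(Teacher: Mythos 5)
Your proof is correct, but it is organized differently from the paper's. The paper works in the opposite direction: it first assembles a candidate interval $I$ from the supports of distributions with $|I(\varphi)|>s/2$, shows that these intervals pairwise overlap so $I$ is a genuine interval of length exactly $s$, then proves that $I$ actually absorbs the (interior of the) convex hull of the support of \emph{every} $\varphi\in L^\perp$, and concludes: if $s<b-a$ then every annihilating distribution is supported in $\overline I$, so $L\supset L_{\overline I}$, which is incompatible with $L_{res}=\{0\}$. You instead argue directly with the two endpoints: assuming $\inf_\varphi\alpha(\varphi)=a_0>a$, you get $L^\perp\subset S_{[a_0,b)}$, hence $L\supset L_{[a_0,b)}\neq\{0\}$, and then you show that any $D$-invariant $M\subset L$ with empty spectrum must sit inside $L_{res}$, contradicting $L_{res}=\{0\}$; with both endpoints attained in the limit, a single sum $\varphi_1+\varphi_2$ of two distributions supported near opposite ends immediately gives $|I(\varphi_1+\varphi_2)|>s$. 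Both proofs ultimately rest on the same two ingredients: the identification of $L_I$ as a residual (empty-spectrum) subspace from \cite{alkor}, and the fact that $L_{res}$ is maximal among residual $D$-invariant subspaces of $L$. The paper uses this only once, at the very end; you invoke it earlier, at each endpoint. What your version adds is an explicit justification of the maximality of $L_{res}$ via $(D-\lambda)M=M$ and $M=p(D)M\subset p(D)L$ — a step the paper leaves implicit. What the paper's version buys is a single clean picture of the common supporting interval $I$, obtained without splitting into two one-sided cases. Either way the combinatorics of adding two distributions is the engine of the argument; you use it once at the end, the paper uses it repeatedly to pin down $I$.
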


\begin{proof}
Let $s$ be the supremum in the statement. Under the assumption on $\sigma(D|_L)$
it follows easily that $s>0$.
Also note that if $|I(\varphi)|,|I(\psi)|>s/2$,
then these intervals must have a nonempty intersection
otherwise $|I(\varphi+\psi)|>s$. Thus,
$$
I=\bigcup\{I(\varphi):~\varphi\in L^\perp,~|I(\varphi)|>s/2\}
$$
is an interval with $|I|\ge s$. It is easy to see that
we must have $|I|= s$. Indeed, if $c,d\in I$ with $d-c>s$ such that
$c\in I(\varphi),~d\in I(\psi)$, then clearly, $|I(\varphi+\psi)|>s$,
which is a contradiction.

We claim that
$I$ contains the interior of any interval $I(\varphi)$, with $\varphi\in L^\perp$.
Assume the contrary, i.e., that
there exists $\varphi\in L^\perp$ such that the
interior of $I(\varphi)$ is not contained in $I$.
If $J$ is a nontrivial interval in $I(\varphi)\setminus I$,
we choose $\psi\in L^\perp$ with
$$
|I(\psi)|> \max\{s-|J|, s/2\}
$$
and note again that in this case $|I(\varphi+\psi)|>s$, which is a contradiction.

The claim implies that all distributions $\varphi\in L^\perp$
are supported on the closure of $I$. If $s=|I|<b-a$,
then $L$ contains $L_I$ which contradicts our assumption.
It remains that $s=b-a$, and the lemma is proved.
\end{proof}


\subsection{Radius of completeness} Let $\Lambda$ be a discrete subset
of $\mathbb{C}$. Put
$$
R(\Lambda)=\sup\{a: \mathcal{E}(\Lambda) \text{ is complete in } L^2[0,a]\}.
$$
The number $R(\Lambda)$ is called the {\itshape radius of
completeness of} $\Lambda$. It is well known that $R(\Lambda)$ is
equal to the Beurling--Malliavin (effective) density of $\Lambda$
\cite{BM2}, but we will not use this remarkable fact. The
following observation is important for our purposes.
\begin{remark}
The conclusions of Theorem \ref{main1},  Proposition \ref{mainprop}, and Corollary \ref{approxres} continue to hold if  we replace in the hypothesis the upper density by the radius of completeness.
\label{rem1}
\end{remark}

\begin{proof}
The condition $D_+(\Lambda)<\frac{|I|}{2\pi}$ is used only in the
proof of Proposition \ref{mainprop}, in order to show that
equation \eqref{n15} implies equation \eqref{n16}. We claim that
this implication remains true under the assumption that
$R(\Lambda)<|I|$. To see this, suppose again that $I=[-\pi,\pi]$.
If $R(\Lambda)<2\pi$, then for any $\varepsilon>0$ there exists a
nontrivial entire function $T$ such that
$G_2T\in\mathcal{PW}_{\pi-\varepsilon}$, where $G_2$ is a
canonical product corresponding to the sequence $\Lambda$. Without
loss of generality we can assume that the zeros of $T$ are in
$\mathbb{C}^+$. Then
$$
\arg G_2^\#+\arg T=\pi(1-\varepsilon)x+\tilde{u}+c,\quad u\in L^1((1+x^2)^{-1}),\quad c\in\mathbb{R},
$$
and this implies \eqref{n16}.
\end{proof}


\subsection{Proof of Theorem \ref{main3}.} { Let $I$ be the residual interval of $L$. 
If $R(\Lambda)\geq |I|$, then $\mathcal{E}(\Lambda)$ is complete in  $L^2(J)$ for any compact subinterval of $J$ of $I$, and consequently,
 $L$ can not be annihilated by any
distribution with  compact support contained in $I$, i.e. $L=C^\infty(a,b)$.}

 If $R(\Lambda)<|I|$, we can use the considerations at the beginning of this section together with  Lemma \ref{lemma1} to conclude
that given $\varepsilon >0$, there exists $\varphi\in L^\perp$
with $|I(\varphi)|>|I|-\varepsilon$. Then by the modified version
of  Corollary \ref{approxres} stated in the remark above,
$L^\perp$ contains all distributions with  support in
$I(\varphi)$, whose Fourier transform vanishes at the points of
$\sigma(D|_L)$ with the appropriate multiplicities. Since
$\varepsilon$ is arbitrary, the result follows. $\square$

\end{document}